\title{Almost-K\"ahler four manifolds with harmonic self-dual Weyl curvature}
\begin{document}
\author{Inyoung Kim}

\maketitle

 \begin{abstract}
 We show that a compact almost-K\"ahler four manifold $(M, g, \omega)$ with harmonic self-dual Weyl curvature and constant scalar curvature is K\"ahler if $c_{1}\cdot\omega\geq0$.
We also prove an integral curvature inequality for compact almost-K\"ahler four manifolds with harmonic self-dual Weyl curvature. 
\end{abstract}

\maketitle
\section{\large\textbf{Introduction}}\label{S:Intro}
Let $(M, \omega)$ be a symplectic four-manifold. Then $(M, \omega)$ admits a compatible almost-complex structure $J$, 
that is, $\omega(X, Y)=\omega(JX, JY)$ and $\omega(v, Jv)>0$ for any nonzero $v\in TM$ [12]. 
If we define a metric by $g(X, Y):=\omega(X, JY)$, then $(M, g, \omega, J)$ is called an almost-K\"ahler structure. When $J$ is integrable, $\omega$ is parallel and this defines a K\"ahler structure. 

The curvature operator of an oriented four-dimensional riemannian manifold decomposes according to the decomposition of $\Lambda^{2}=\Lambda^{+}\oplus \Lambda^{-}$ as follows. 

\[ 
\mathfrak{R}=
\LARGE
\begin{pmatrix}

 \begin{array}{c|c}
\scriptscriptstyle{W_{+}\hspace{5pt}\scriptstyle{+}\hspace{5pt}\frac{s}{12}}I& \scriptscriptstyle{ric_{0}}\\
 \hline
 
 \hspace{10pt}
 
 \scriptscriptstyle{ric_{0}^{*}}& \scriptscriptstyle{W_{-}\hspace{5pt}\scriptstyle{+}\hspace{5pt}\frac{s}{12}}I\\
 \end{array}
 \end{pmatrix}
 \]

If $W_{-}=0$, then $g$ is called to be self-dual. 
If $ric_{0}=0$, then $g$ is called to be Einstein. 
A compact almost-K\"ahler Einstein manifold with nonnegative scalar curvature is K\"ahler [14]. 
In a four-dimensional case, it was shown in [8] that a compact almost-K\"ahler four manifold with $\delta W_{+}=0$ and nonnegative scalar curvature is K\"ahler with constant scalar curvature.

Using methods in proofs in [8], we prove two results. 
First, we show that a compact almost-K\"ahler four manifold with $\delta W_{+}=0$ and constant
scalar curvature is K\"ahler if $c_{1}\cdot\omega\geq 0$. 
This shows in particular a compact almost-K\"ahler Einstein four manifold with 
$c_{1}\cdot\omega\geq 0$ is K\"ahler-Einstein. 

\vspace{20pt}

\hrule

\vspace{20pt}
$\mathbf{Keywords}$: almost-K\"ahler, four manifold, self-dual Weyl curvature, Einstein metric

$\mathbf{MSC}$: 53C21, 53C24, 53C25. 

Republic of Korea

Email address: kiysd5@gmail.com

This result is not new since there is a more general result in [4]. 
On the other hand, our hypothesis $\delta W_{+}=0$ is weaker than an Einstein condition.

Secondly, we show that for a compact almost-K\"ahler four-manifold with $\delta W_{+}=0$, 
we have 
\[\int_{M}\frac{s^{2}}{24}d\mu\geq \int_{M}|W_{+}|^{2}d\mu.\]
The same result was shown in [10]. 
Our proof entirely depends on the formulas given in [8]. 
Using this inequality, it was shown that a compact almost-K\"ahler four manifold with $J$-invariant Ricci tensor is self-dual K\"ahler-Einstein if the scalar curvature is constant and $b_{-}=0$ [5].
We include a proof in case of compact almost-K\"ahler Einstein four manifolds.
\vspace{20pt}

\section{\large\textbf{Almost-K\"ahler four manifolds with curvature conditions}}\label{S:Intro}
Dr$\breve{a}$ghici showed that a compact almost-K\"ahler four manifold $(M, g, \omega)$ with $J$-invariant Ricci tensor and constant scalar curvature is K\"ahler if $c_{1}\cdot\omega\geq 0$ [4]. 
In this article, we consider $\delta W_{+}=$ condition instead of $J$-invariant Ricci tensor. 
Let $(M, g, \omega)$ be a compact almost-K\"ahler four manifold. Then $\omega$ is a self-dual 
harmonic 2-form with length $||\omega||=\sqrt{2}$
and this gives the following formula.
\[0=\Delta\omega=\nabla^{*}\nabla\omega-2W_{+}(\omega)+\frac{s}{3}\omega.\]
LeBrun showed that this formula with $\delta W_{+}=0$ implies the following integral equality [8].

\newtheorem{Proposition}{Proposition}

\begin{Proposition}
(LeBrun) Let $(M, g, \omega)$ be a compact almost-K\"ahler four-manifold with $\delta W_{+}=0$. 
Then 
\[\int_{M}sW_{+}(\omega, \omega)d\mu=8\int_{M}(|W_{+}|^{2}-\frac{1}{2}|W_{+}(\omega)^{\perp}|^{2})d\mu.\]
\end{Proposition}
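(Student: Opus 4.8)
The plan is to read off the identity by combining three Weitzenb\"ock-type inputs: the given formula for the harmonic self-dual $2$-form $\omega$ itself, the analogous formula for the auxiliary self-dual $2$-form $\psi:=W_{+}(\omega)$, and a Bochner identity for $W_{+}$ (equivalently, the contracted second Bianchi identity), which is the one and only place the hypothesis $\delta W_{+}=0$ will enter.

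First I would record the pointwise consequence of the given Weitzenb\"ock formula: pairing $0=\nabla^{*}\nabla\omega-2W_{+}(\omega)+\tfrac{s}{3}\omega$ with $\omega$ and using $\|\omega\|^{2}\equiv 2$ (so that $\langle\nabla\omega,\omega\rangle=0$ and $\langle\nabla^{*}\nabla\omega,\omega\rangle=|\nabla\omega|^{2}$) gives
\[
|\nabla\omega|^{2}=2W_{+}(\omega,\omega)-\tfrac{2s}{3},
\]
which I will use throughout to eliminate $|\nabla\omega|^{2}$ in favour of $W_{+}(\omega,\omega)$ and $s$. This also exhibits $\nabla\omega$ as valued in the rank-two subbundle $\omega^{\perp}\subset\Lambda^{+}$; since $d\omega=0$ forces $\nabla\omega$ into the "pure type" part of $\Lambda^{1}\otimes\omega^{\perp}$, one obtains the algebraic identity $\sum_{i}\nabla_{e_{i}}\omega\otimes\nabla_{e_{i}}\omega=\tfrac12|\nabla\omega|^{2}\,\pi_{\omega^{\perp}}$ (with $\pi_{\omega^{\perp}}$ the orthogonal projection of $\Lambda^{+}$ onto $\omega^{\perp}$), which controls $\int_{M}|W_{+}(\nabla\omega)|^{2}$. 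I would also use the purely algebraic facts, valid for the trace-free symmetric endomorphism $W_{+}$ of the rank-three bundle $\Lambda^{+}$: that $|W_{+}(\omega)^{\perp}|^{2}=|W_{+}(\omega)|^{2}-\tfrac12 W_{+}(\omega,\omega)^{2}$, and, by Cayley--Hamilton, $\langle W_{+}^{3}\omega,\omega\rangle=\tfrac12|W_{+}|^{2}W_{+}(\omega,\omega)+2\det W_{+}$.

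Next I would apply the Weitzenb\"ock formula for self-dual $2$-forms to $\psi=W_{+}(\omega)$, namely $\Delta\psi=\nabla^{*}\nabla\psi-2W_{+}(\psi)+\tfrac{s}{3}\psi$, take the $L^{2}$ pairing with $\psi$, and integrate; since $\psi$ is self-dual one has $|d\psi|=|\delta\psi|$, so $\int_{M}\langle\Delta\psi,\psi\rangle=2\int_{M}|\delta\psi|^{2}$, which yields
\[
2\int_{M}|\delta\psi|^{2}\,d\mu=\int_{M}|\nabla\psi|^{2}\,d\mu-2\int_{M}\langle W_{+}^{3}\omega,\omega\rangle\,d\mu+\tfrac13\int_{M}s\,|W_{+}(\omega)|^{2}\,d\mu .
\]
I would then expand $\nabla\psi=(\nabla W_{+})(\omega)+W_{+}(\nabla\omega)$: the square of the second summand is evaluated by the pure-type identity above, the cross term $\int_{M}\langle(\nabla W_{+})(\omega),W_{+}(\nabla\omega)\rangle$ is integrated by parts and reduced via the $\omega$-Weitzenb\"ock formula, and both $\int_{M}|(\nabla W_{+})(\omega)|^{2}$ and $\int_{M}|\delta\psi|^{2}$ -- where $\delta\psi$ is the explicit contraction of $W_{+}$ against $\nabla\omega$ once the $\delta W_{+}$ term has been dropped -- are reduced to lower-order curvature integrals using the Bochner identity for $W_{+}$ under $\delta W_{+}=0$. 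Substituting these back, eliminating $|\nabla\omega|^{2}$ by the first step and invoking the two algebraic identities, the cubic-in-$W_{+}$ contributions (the $\det W_{+}$ and $|W_{+}|^{2}W_{+}(\omega,\omega)$ terms) cancel, and the surviving quadratic relation, upon collecting terms, is exactly $\int_{M}sW_{+}(\omega,\omega)\,d\mu=8\int_{M}\big(|W_{+}|^{2}-\tfrac12|W_{+}(\omega)^{\perp}|^{2}\big)\,d\mu$.

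The step I expect to be the main obstacle is this last bookkeeping: tracking the various quadratic and cubic curvature terms through the repeated integrations by parts, checking that the cubic terms cancel, and producing exactly the constant $8$. The genuinely delicate part is putting the Bochner/second Bianchi identity for $W_{+}$ into precisely the form needed to evaluate $\int_{M}|(\nabla W_{+})(\omega)|^{2}$ and $\int_{M}|\delta\psi|^{2}$, since that is the sole point at which the hypothesis $\delta W_{+}=0$ is used; everything else is routine Weitzenb\"ock manipulation.
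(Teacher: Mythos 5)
There is a genuine gap at the central step. Your plan is to apply the Weitzenb\"ock formula for self-dual $2$-forms to $\psi:=W_{+}(\omega)$ and pair with $\psi$, which forces you to compute $\int_{M}|\nabla\psi|^{2}\,d\mu$ with $\nabla\psi=(\nabla W_{+})(\omega)+W_{+}(\nabla\omega)$. The square of the first summand, $\int_{M}|(\nabla W_{+})(\omega)|^{2}\,d\mu$, is a quantity that the hypothesis $\delta W_{+}=0$ does not control: the Bochner identity for $W_{+}$ only evaluates the full norm $\int_{M}|\nabla W_{+}|^{2}\,d\mu$ (in terms of $s|W_{+}|^{2}$ and $\det W_{+}$), and $|(\nabla W_{+})(\omega)|^{2}$ is not pointwise a function of $|\nabla W_{+}|^{2}$. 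Moreover, since $\delta W_{+}=0$ kills the $\nabla W_{+}$ contribution to $\delta\psi$, the term $2\int_{M}|\delta\psi|^{2}\,d\mu$ on the other side contains no compensating copy of it, and integrating the cross term by parts produces yet other contractions of $\nabla W_{+}$ rather than $-|(\nabla W_{+})(\omega)|^{2}$. So the relation you would actually derive is the desired identity polluted by uncancelled $\nabla W_{+}$ terms (and an unmatched $\det W_{+}$ from $\langle W_{+}^{3}\omega,\omega\rangle$); the asserted ``reduction to lower-order curvature integrals'' and the cancellation of the cubic terms are precisely the missing content, and I do not see how to supply them along this route.

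The way to avoid the problem is to never square $\nabla W_{+}$: use the Weitzenb\"ock formula that $\delta W_{+}=0$ imposes on $W_{+}$ itself, namely $0=\nabla^{*}\nabla W_{+}+\frac{s}{2}W_{+}-6\,W_{+}\circ W_{+}+2|W_{+}|^{2}\,\mathrm{id}$, pair it with the section $\omega\otimes\omega$ of $S^{2}\Lambda^{+}$, and integrate by parts so that $\nabla^{*}\nabla$ lands on $\omega\otimes\omega$. Then $\nabla^{*}\nabla(\omega\otimes\omega)=(\nabla^{*}\nabla\omega)\otimes\omega+\omega\otimes(\nabla^{*}\nabla\omega)-2\sum_{a}\nabla_{a}\omega\otimes\nabla^{a}\omega$ is handled by the $\omega$-Weitzenb\"ock formula together with your pure-type identity for $\nabla\omega$; the algebraic term contributes $-6|W_{+}(\omega)|^{2}+4|W_{+}|^{2}$, and your observation $|W_{+}(\omega)|^{2}=|W_{+}(\omega)^{\perp}|^{2}+\frac12 W_{+}(\omega,\omega)^{2}$ collapses everything to the stated equality with the constant $8$, with no $\det W_{+}$ or $|(\nabla W_{+})(\omega)|^{2}$ terms ever appearing. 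This is essentially LeBrun's argument in [8]. Note also that the paper itself does not reprove Proposition 1 in general --- it quotes [8] --- and its only independent verification is Corollary 3, which recovers the identity in the Einstein case by an entirely different Chern--Weil computation for the Blair connection on the anticanonical line bundle.
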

The proof of Lemma 2 in [8] contains a stronger result that the statement in Lemma 2. 

\newtheorem{Lemma}{Lemma}
\begin{Lemma}
(LeBrun) Let $(M, g, \omega)$ be an almost-K\"ahler four manifold. Then 
\[|W_{+}|^{2}-|W_{+}(\omega)^{\perp}|^{2}\geq\frac{3}{8}[W_{+}(\omega, \omega)]^{2},\]
with equality at points where $W_{+}(\omega)^{\perp}=0$. 
\end{Lemma}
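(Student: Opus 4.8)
The assertion is pointwise, so the plan is to fix $p\in M$ and argue entirely on the three-dimensional Euclidean space $\Lambda^{+}_{p}$, on which $W_{+}$ acts as a symmetric trace-free endomorphism and on which $\omega$, being self-dual of length $\sqrt{2}$, is a vector of norm $\sqrt{2}$. First I would adapt an orthonormal basis to $\hat{\omega}:=\omega/\sqrt{2}$: writing $(\hat{\omega},\phi_{1},\phi_{2})$ for such a basis, record the matrix of $W_{+}$ in it as
\[
\begin{pmatrix} a & b_{1} & b_{2}\\ b_{1} & c & d\\ b_{2} & d & e\end{pmatrix},\qquad a+c+e=0 .
\]
A direct computation then gives $W_{+}(\omega,\omega)=2a$, $W_{+}(\omega)^{\perp}=\sqrt{2}\,(b_{1}\phi_{1}+b_{2}\phi_{2})$ so that $|W_{+}(\omega)^{\perp}|^{2}=2(b_{1}^{2}+b_{2}^{2})$, and $|W_{+}|^{2}=a^{2}+c^{2}+e^{2}+2b_{1}^{2}+2b_{2}^{2}+2d^{2}$.

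Substituting these into the two sides of the inequality, the terms involving $b_{1},b_{2}$ cancel and, after eliminating $a=-(c+e)$, one is left with
\[
|W_{+}|^{2}-|W_{+}(\omega)^{\perp}|^{2}-\frac{3}{8}\bigl[W_{+}(\omega,\omega)\bigr]^{2}=c^{2}+e^{2}+2d^{2}-\frac{1}{2}(c+e)^{2}=\frac{1}{2}(c-e)^{2}+2d^{2}\ \geq\ 0 ,
\]
which is the desired estimate; note that the constant $3/8$ is precisely the one that turns the leftover expression into a perfect square. The right-hand side vanishes exactly when $c=e$ and $d=0$, that is, exactly when $W_{+}$ restricted to the plane $\omega^{\perp}\subset\Lambda^{+}_{p}$ is a multiple of the identity, equivalently when $W_{+}$ has eigenvalues $(\mu,-\mu/2,-\mu/2)$ with $\hat{\omega}$ spanning the $\mu$-eigenline.

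It remains to check that equality does hold at a point where $W_{+}(\omega)^{\perp}=0$. There $b_{1}=b_{2}=0$, so $\hat{\omega}$ is an eigenvector of $W_{+}$, and what must be shown is that the other two eigenvalues agree at that point. This is where the almost-K\"ahler hypothesis enters, and it is the only non-elementary step: for an arbitrary self-dual $2$-form of constant length on an oriented four-manifold the vanishing of $W_{+}(\omega)^{\perp}$ imposes no relation among $c,d,e$, so the linear algebra above cannot by itself yield the equality clause. The way I would close the gap is to appeal to the structure of the self-dual Weyl curvature of an almost-K\"ahler four-manifold used in [8], which ties the $\omega^{\perp}$-block of $W_{+}$ to the intrinsic torsion $\nabla\omega$ in such a way that, once the off-diagonal part $W_{+}(\omega)^{\perp}$ vanishes, that block is forced to be scalar; granting this, $c=e$ and $d=0$ there and equality follows from the display above. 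I expect the hard part to be supplying this almost-K\"ahler input about $W_{+}$ cleanly, rather than any of the preceding algebra.
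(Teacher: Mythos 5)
Your computation of the inequality itself is correct and complete, and it is surely the argument the paper has in mind: note that the paper offers no proof of this Lemma at all, but simply extracts it from the proof of Lemma~2 of [8], where LeBrun writes $W_{+}$ in an orthonormal basis of $\Lambda^{+}$ adapted to $\omega/\sqrt{2}$ and exploits the trace-free condition exactly as you do. Your identity
\[
|W_{+}|^{2}-|W_{+}(\omega)^{\perp}|^{2}-\tfrac{3}{8}\bigl[W_{+}(\omega,\omega)\bigr]^{2}=\tfrac{1}{2}(c-e)^{2}+2d^{2}
\]
is right (with the conventions $|\omega|^{2}=2$ and $|W_{+}|^{2}$ the norm of $W_{+}$ as an endomorphism of $\Lambda^{+}$, so that $|W_{+}|^{2}=s^{2}/24$ in the K\"ahler case), and it proves the displayed inequality for \emph{any} self-dual form of length $\sqrt{2}$, almost-K\"ahler or not.

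The gap is the equality clause, which you have diagnosed accurately but not closed. Equality holds precisely where the trace-free part of $W_{+}|_{\omega^{\perp}}$ vanishes ($c=e$, $d=0$), and the vanishing of $W_{+}(\omega)^{\perp}$ (i.e.\ $b_{1}=b_{2}=0$) imposes no constraint on that block. The ``almost-K\"ahler input'' you appeal to is not available in the form you want: in the decomposition $W_{+}=W_{+}^{(1)}+W_{+}^{(2)}+W_{+}^{(3)}$ standard in this literature, $W_{+}^{(2)}$ (your off-diagonal $b$'s) and $W_{+}^{(3)}$ (your $\tfrac{1}{2}(c-e)^{2}+2d^{2}$ term) are independent components of the curvature, and neither [8] nor the present paper supplies a pointwise identity forcing $W_{+}^{(3)}=0$ wherever $W_{+}^{(2)}=0$; the clause as stated reads like a garbled version of the equality condition for the \emph{weaker} inequality with coefficient $\tfrac{1}{2}$, for which $W_{+}(\omega)^{\perp}=0$ is merely \emph{necessary}. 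Fortunately, the only instance of the equality clause the paper ever uses (in the equality discussion of Theorem~3) is the K\"ahler case, where $W_{+}$ has eigenvalues $(s/6,-s/12,-s/12)$ with $\omega$ spanning the simple eigenspace, so $c=e$ and $d=0$ hold for the elementary reason your own display already captures; replacing your final paragraph by this observation would make your argument prove everything the paper actually needs from the Lemma.
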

Lemma 1 implies in particular that 
\[|W_{+}|^{2}-\frac{1}{2}|W_{+}(\omega)^{\perp}|^{2}\geq\frac{3}{8}[W_{+}(\omega, \omega)]^{2}.\]
Using this in Proposition 1, we get the following result [8].

\begin{Proposition}
(LeBrun) Let $(M, g, \omega)$ be a compact almost-K\"ahler four-manifold with $\delta W_{+}=0$. 
Then 
\[\int_{M}W_{+}(\omega, \omega)(W_{+}(\omega, \omega)-\frac{s}{3})d\mu\leq 0.\] 
\end{Proposition}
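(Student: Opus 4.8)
The plan is to combine Proposition~1 with the refined pointwise bound that follows from Lemma~1. Proposition~1 gives the identity
\[\int_{M}sW_{+}(\omega,\omega)\,d\mu=8\int_{M}\Bigl(|W_{+}|^{2}-\tfrac{1}{2}|W_{+}(\omega)^{\perp}|^{2}\Bigr)d\mu,\]
and Lemma~1 immediately yields the pointwise inequality
\[|W_{+}|^{2}-\tfrac{1}{2}|W_{+}(\omega)^{\perp}|^{2}\geq\tfrac{3}{8}\bigl[W_{+}(\omega,\omega)\bigr]^{2}\]
(this is the displayed consequence of Lemma~1 already recorded in the excerpt). So first I would integrate this pointwise bound over $M$, which is legitimate since $M$ is compact, to obtain
\[\int_{M}\Bigl(|W_{+}|^{2}-\tfrac{1}{2}|W_{+}(\omega)^{\perp}|^{2}\Bigr)d\mu\geq\tfrac{3}{8}\int_{M}\bigl[W_{+}(\omega,\omega)\bigr]^{2}d\mu.\]

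Next I would substitute this into the right-hand side of the Proposition~1 identity. Since the factor $8$ is positive, this gives
\[\int_{M}sW_{+}(\omega,\omega)\,d\mu=8\int_{M}\Bigl(|W_{+}|^{2}-\tfrac{1}{2}|W_{+}(\omega)^{\perp}|^{2}\Bigr)d\mu\geq 3\int_{M}\bigl[W_{+}(\omega,\omega)\bigr]^{2}d\mu.\]
Rearranging, $\int_{M}\bigl(sW_{+}(\omega,\omega)-3[W_{+}(\omega,\omega)]^{2}\bigr)d\mu\geq 0$, which upon dividing by $3$ and negating is precisely
\[\int_{M}W_{+}(\omega,\omega)\Bigl(W_{+}(\omega,\omega)-\tfrac{s}{3}\Bigr)d\mu\leq 0,\]
the claimed inequality. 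So the whole argument is essentially a one-line substitution of the pointwise estimate into the integral identity, followed by elementary rearrangement.

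There is really no serious obstacle here: the hard analytic work — deriving the Weitzenb\"ock-type identity of Proposition~1 from $\Delta\omega=0$ and $\delta W_{+}=0$, and establishing the algebraic curvature estimate of Lemma~1 — has already been carried out in [8] and is being quoted. The only points requiring a word of care are that the integration of the pointwise inequality is valid because $M$ is compact (so all integrands are integrable), and that the multiplicative constant $8$ in Proposition~1 is positive so the inequality is preserved rather than reversed. I would also note in passing, for later use, that equality throughout forces $W_{+}(\omega)^{\perp}=0$ almost everywhere by the equality clause in Lemma~1 — but that observation is not needed for the statement of Proposition~2 itself.
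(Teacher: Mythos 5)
Your argument is correct and is exactly the paper's route: integrate the pointwise consequence of Lemma~1, namely $|W_{+}|^{2}-\frac{1}{2}|W_{+}(\omega)^{\perp}|^{2}\geq\frac{3}{8}[W_{+}(\omega,\omega)]^{2}$, and substitute it into the integral identity of Proposition~1, then rearrange. (Only your parenthetical aside overstates Lemma~1, whose equality clause as stated runs in the direction $W_{+}(\omega)^{\perp}=0\Rightarrow$ equality, but you correctly note that remark is not needed here.)
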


\newtheorem{Theorem}{Theorem}
\begin{Theorem}
Let $(M, g, \omega)$ be a compact almost-K\"ahler four-manifold with $\delta W_{+}=0$ and constant scalar curvature. 
If $c_{1}\cdot\omega\geq0$, then the scalar curvature is nonnegative and $(M, g, \omega)$ is a K\"ahler surface with constant scalar curvature. 
\end{Theorem}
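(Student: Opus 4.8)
The plan is to play the Weitzenb\"ock formula for $\omega$ off against the standard expression for the Chern number $c_{1}\cdot\omega$. Write $s\equiv s_{0}$ for the constant scalar curvature, put $V=\int_{M}d\mu$, and set $W=\int_{M}W_{+}(\omega,\omega)\,d\mu$. First I would pair the identity $0=\nabla^{*}\nabla\omega-2W_{+}(\omega)+\frac{s}{3}\omega$ with $\omega$ in $L^{2}$; using $|\omega|^{2}=2$ this yields
\[W=\frac{1}{2}\int_{M}|\nabla\omega|^{2}\,d\mu+\frac{s_{0}}{3}V,\]
so $W\geq\frac{s_{0}}{3}V$, with equality if and only if $\nabla\omega\equiv0$. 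Next I would use the well-known identity $4\pi\,c_{1}\cdot\omega=\int_{M}s^{*}\,d\mu$ for compact almost-K\"ahler four-manifolds, combined with $s^{*}=s+\frac{1}{2}|\nabla\omega|^{2}$; eliminating $\int_{M}|\nabla\omega|^{2}\,d\mu$ between this and the previous identity gives
\[4\pi\,c_{1}\cdot\omega=W+\frac{2s_{0}}{3}V.\]
Finally, since $s$ is constant, Proposition~2 reads $\int_{M}W_{+}(\omega,\omega)^{2}\,d\mu\leq\frac{s_{0}}{3}W$.

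The nonnegativity of $s$ then follows from a sign check. Suppose $s_{0}<0$. The relation $4\pi\,c_{1}\cdot\omega=W+\frac{2s_{0}}{3}V$ together with the hypothesis $c_{1}\cdot\omega\geq0$ forces $W\geq-\frac{2s_{0}}{3}V>0$, so the right-hand side $\frac{s_{0}}{3}W$ of the rewritten Proposition~2 is negative while its left-hand side $\int_{M}W_{+}(\omega,\omega)^{2}\,d\mu$ is nonnegative --- a contradiction. Hence $s_{0}\geq0$.

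To conclude, assume $s_{0}\geq0$; I would show $\nabla\omega\equiv0$. The first identity gives $W\geq0$. By Cauchy--Schwarz, $W^{2}\leq V\int_{M}W_{+}(\omega,\omega)^{2}\,d\mu$, and Proposition~2 bounds the latter by $\frac{s_{0}}{3}W$, so $W^{2}\leq\frac{s_{0}}{3}VW$. If $s_{0}=0$ this forces $W=0$, hence $\int_{M}|\nabla\omega|^{2}\,d\mu=0$. If $s_{0}>0$, then $W=0$ is impossible, since it would make $\int_{M}|\nabla\omega|^{2}\,d\mu=-\frac{2s_{0}}{3}V<0$; thus $W>0$, and dividing gives $W\leq\frac{s_{0}}{3}V$, which with $W\geq\frac{s_{0}}{3}V$ again forces $\nabla\omega\equiv0$. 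In every case $J$ is parallel for the Levi-Civita connection, hence integrable, so $(M,g,\omega)$ is a K\"ahler surface, of constant scalar curvature by hypothesis.

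The one ingredient that is not a formal manipulation is the Chern--Weil formula $4\pi\,c_{1}\cdot\omega=\int_{M}s^{*}\,d\mu$, and I expect fixing its normalization to be the point requiring care: the exclusion of $s_{0}<0$ closes precisely because the coefficient of $|\nabla\omega|^{2}$ in the expression for $4\pi\,c_{1}\cdot\omega$ is positive and strictly below $\frac{3}{2}$ (it equals $\frac{1}{2}$). Granted that, the whole argument is just these two integral identities, Proposition~2, and one application of Cauchy--Schwarz, in the spirit of the proofs in [8].
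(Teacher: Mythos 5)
Your proof is correct, and it rests on the same three ingredients as the paper's: Proposition~2, the Weitzenb\"ock identity (which gives the pointwise relation $s^{*}-s=|\nabla\omega|^{2}$, hence your $W=\frac{1}{2}\int|\nabla\omega|^{2}+\frac{s_{0}}{3}V$), and the Chern--Weil formula for $c_{1}\cdot\omega$. The way you close the argument is genuinely different, though. The paper substitutes $W_{+}(\omega,\omega)=\frac{s^{*}}{2}-\frac{s}{6}$ into Proposition~2 to obtain $\int(3s^{*}-s)(s^{*}-s)\,d\mu\leq 0$ and then shows the left side is also $\geq 0$ by regrouping the integrand: for $s<0$ as $-4s(s^{*}+s)+5s^{2}+3(s^{*})^{2}$ together with $\int(s^{*}+s)\,d\mu\geq 0$, and for $s\geq 0$ as $(s^{*}-s)^{2}+2s^{*}(s^{*}-s)$, which is pointwise nonnegative because $s^{*}=s+|\nabla\omega|^{2}\geq 0$. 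You instead stay entirely at the level of the averaged quantity $W=\int W_{+}(\omega,\omega)\,d\mu$ and use Cauchy--Schwarz, $W^{2}\leq V\int W_{+}(\omega,\omega)^{2}\,d\mu\leq\frac{s_{0}}{3}VW$, which replaces the paper's pointwise sign analysis by a single inequality between integrals; this is a clean alternative, at the cost of one extra application of Cauchy--Schwarz. One caution on the point you yourself flagged: the two identities you quote, $4\pi c_{1}\cdot\omega=\int_{M}s^{*}\,d\mu$ and $s^{*}=s+\frac{1}{2}|\nabla\omega|^{2}$, are each off from the conventions the paper uses, namely $4\pi c_{1}\cdot\omega=\int_{M}\frac{s+s^{*}}{2}\,d\mu$ and $s^{*}=s+|\nabla\omega|^{2}$ (with $|\omega|^{2}=2$); the two discrepancies cancel, so your combined formula $4\pi c_{1}\cdot\omega=W+\frac{2s_{0}}{3}V$ is exactly what the paper's formulas yield and the argument stands, but you should cite the standard normalizations so the step does not look like a fortunate cancellation.
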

\begin{proof}
We note that 
\[s^{*}:=2R(\omega, \omega)=2\left(W_{+}(\omega, \omega)+\frac{s}{12}|\omega|^{2}\right).\]
From this, we get 
\[W_{+}(\omega, \omega)=\frac{s^{*}}{2}-\frac{s}{6}.\]
Using this in Proposition 2, we get 
\[\int_{M}(3s^{*}-s)(s^{*}-s)d\mu\leq 0.\]
Suppose $s$ is negative constant. Then from 
\[0\leq 4\pi c_{1}\cdot\omega=\int_{M}\frac{s+s^{*}}{2}d\mu,\]
we have 
\[\int_{M}(3s^{*}-s)(s^{*}-s)d\mu=\int_{M}(3(s^{*})^{2}-4s^{*}s+s^{2})d\mu\]
\[=\int_{M}(-4s(s^{*}+s)+5s^{2}+3(s^{*})^{2})d\mu\geq 0.\]
Thus, we get $s=0$, which is a contradiction. 
Thus, $s\geq 0$. One of the main results in [8] is that a compact almost-K\"ahler four manifold
with $s^{*}\geq 0$ and $\delta W_{+}=0$ is a K\"ahler surface with constant scalar curvature. Using
\[0=\frac{1}{2}\Delta|\omega|^{2}+|\nabla\omega|^{2}-2W_{+}(\omega, \omega)+\frac{s}{3}|\omega|^{2}
=|\nabla\omega|^{2}-2W_{+}(\omega, \omega)+\frac{s}{3}|\omega|^{2},\]
we get 
\[s^{*}=2\left(W_{+}(\omega, \omega)+\frac{s}{12}|\omega|^{2}\right)=s+|\nabla\omega|^{2}.\]
Thus, $s^{*}-s\geq 0$, with equality on $M$ if and only if $(M, g, \omega)$ is K\"ahler. 
Then if $s^{*}\geq 0$, we have 
\[\int_{M}(3s^{*}-s)(s^{*}-s)d\mu=\int_{M}(s^{*}-s)^{2}+2s^{*}(s^{*}-s)d\mu\geq 0.\]
Since $\int_{M}(3s^{*}-s)(s^{*}-s)d\mu\leq 0$ by Proposition 2, we get $s^{*}\equiv s$ if $s^{*}\geq 0$. 
Since $s^{*}\geq 0$ if $s\geq 0$, we get a compact almost-K\"ahler four manifold with $\delta W_{+}=0$
is K\"ahler if $s\geq 0$. 
We note that $s$ on a K\"ahler surface is constant if $\delta W_{+}=0$.

\end{proof}

Since an Einstein metric has $\delta W_{+}=0$ and constant scalar curvature, we get the following corollary. 

\newtheorem{Corollary}{Corollary}
\begin{Corollary}
Let $(M, g, \omega)$ be a compact almost-K\"ahler Einstein four manifold. 
Suppose $c_{1}\cdot\omega\geq 0$. Then $(M, g, \omega)$ is K\"ahler-Einstein. 
\end{Corollary}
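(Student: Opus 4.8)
The plan is to reduce the corollary to Theorem~1 by checking that a compact almost-K\"ahler Einstein four-manifold automatically satisfies the two curvature hypotheses appearing there, namely $\delta W_{+}=0$ and constant scalar curvature. These are precisely the two facts flagged in the sentence preceding the statement, so the remaining work is only to recall why they hold.

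First, constancy of the scalar curvature: in any dimension $n\geq 3$ an Einstein metric has $ds=0$. This is Schur's lemma, obtained by contracting the second Bianchi identity once to get $\nabla^{i}ric_{ij}=\tfrac12\nabla_{j}s$ and substituting $ric=\tfrac{s}{n}g$, which forces $\bigl(1-\tfrac{2}{n}\bigr)\nabla_{j}s=0$; for $n=4$ this gives $s$ constant. Second, the Weyl condition: in dimension four the once-contracted second Bianchi identity identifies the divergence $\delta W$ with the Cotton tensor built from $\nabla ric_{0}$ and $ds$. For an Einstein metric $ric_{0}\equiv 0$ and $ds=0$, so $\delta W=0$, and passing to self-dual parts gives $\delta W_{+}=0$ (equivalently, on an Einstein four-manifold $W_{+}$, viewed as a section of $S^{2}_{0}\Lambda^{+}$, is harmonic). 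In particular the hypothesis $\delta W_{+}=0$ of Theorem~1 is satisfied.

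With both hypotheses verified and $c_{1}\cdot\omega\geq 0$ assumed, Theorem~1 applies verbatim and yields that $s\geq 0$ and that $(M,g,\omega)$ is a K\"ahler surface. Since the metric was assumed Einstein, it is then simultaneously K\"ahler and Einstein, hence K\"ahler-Einstein by definition, which is the assertion.

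There is essentially no obstacle here: all the analytic content is already contained in Theorem~1 (which in turn rests on Propositions~1 and~2 and Lemma~1). The only point needing a moment's care is recalling the precise form of the contracted Bianchi identity that makes $\delta W$ vanish for Einstein four-manifolds, together with the observation that compactness and the positivity of $c_{1}\cdot\omega$ are exactly the remaining inputs that Theorem~1 consumes.
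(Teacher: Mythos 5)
Your proposal is correct and follows exactly the paper's route: the corollary is deduced from Theorem~1 by observing that an Einstein four-manifold has constant scalar curvature (Schur's lemma) and satisfies $\delta W_{+}=0$ (contracted second Bianchi identity). The paper states this reduction in one sentence; your write-up merely supplies the standard justifications for those two facts.
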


This result can be also proved from Dr$\breve{a}$gichi's result [4] 
since an almost-K\"ahler Einstein four manifold has $J$-invariant Ricci tensor and has constant scalar curvature. 

Let $(M, \omega)$ be a symplectic four manifold. If $c_{1}\cdot\omega>0$, then by [15], $b_{+}=1$. 
If a symplectic four manifold has $b_{+}=1$ and $c_{1}\cdot\omega>0$, then $M$ is diffeomorphic to a rational or ruled surface [11]. 
Conversely, there is a following result [9]. 

\begin{Proposition}
(LeBrun) Let $M$ be a smooth four-manifold which is diffeomorphic to a rational or ruled surface. 
Suppose $2\chi+3\tau\geq0$. Then $c_{1}\cdot\omega>0$ for any symplectic form $\omega$ on $M$. 
\end{Proposition}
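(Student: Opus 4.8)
The plan is to combine Seiberg--Witten theory with the light-cone (Hodge index) inequality. First I would record that for any symplectic four-manifold $c_{1}^{2}=2\chi+3\tau$, so the hypothesis $2\chi+3\tau\geq 0$ says exactly that $c_{1}(M,\omega)^{2}\geq 0$; also, a rational or ruled surface has $b_{+}=1$. Granting for the moment that $c_{1}=c_{1}(M,\omega)$ is a nonzero class, it then lies (having nonnegative square) in the closure of one of the two components of the positive cone $\{a\in H^{2}(M;\mathbb{R}):a^{2}>0\}$, while $[\omega]$ lies in the open cone. If $c_{1}$ lies in the closure of the \emph{same} component as $[\omega]$, the light-cone lemma gives $c_{1}\cdot[\omega]>0$, with equality excluded because the only null class orthogonal to the interior class $[\omega]$ is zero. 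So the statement reduces to showing that $c_{1}$ does not lie in the opposite component, i.e.\ that the symplectic canonical class $K_{\omega}=-c_{1}$ satisfies $K_{\omega}\cdot[\omega]<0$.

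For this I would invoke the classification of symplectic structures on rational and ruled four-manifolds (McDuff in the minimal case, Li--Liu in general), itself resting on Taubes' nonvanishing theorem and the $b_{+}=1$ wall-crossing formula: on a four-manifold $M$ diffeomorphic to a rational or ruled surface, every symplectic form becomes, after an orientation-preserving diffeomorphism $\phi$ and a positive rescaling, a K\"ahler form on a rational or ruled \emph{complex} surface $X$, with $K_{\omega}=\phi^{*}K_{X}$; in particular $c_{1}=-\phi^{*}K_{X}\neq 0$ since $K_{X}\neq 0$ for these surfaces, which justifies the hypothesis made above. Since $c_{1}\cdot[\omega]$ is a diffeomorphism invariant of $(M,\omega)$, it now suffices to show $-K_{X}\cdot\kappa>0$ for every K\"ahler class $\kappa$ on such an $X$. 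The numerical hypothesis enters here: $(-K_{X})^{2}=c_{1}^{2}=2\chi+3\tau\geq 0$, so $X$ is either a rational surface with $K_{X}^{2}\geq 0$ or a geometrically ruled surface over an elliptic curve, and for every such $X$ the anticanonical class $-K_{X}$ is effective and nonzero (by Riemann--Roch and Serre duality in the rational case, and classically for the elliptic ruled surfaces). An effective nonzero divisor integrates positively over each of its irreducible components, hence pairs strictly positively with any K\"ahler class; transporting this back by $\phi$ gives $c_{1}(M,\omega)\cdot[\omega]>0$.

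The main obstacle is the reduction to K\"ahler forms. Asserting that a symplectic form on the smooth manifold underlying a rational or ruled surface is necessarily "standard" is precisely where the full weight of gauge theory is needed: Taubes' theorem that the canonical $\mathrm{Spin}^{c}$ structure carries a nonzero Seiberg--Witten invariant in the Taubes chamber, the $b_{+}=1$ wall-crossing formula, and the Li--Liu structure results excluding exotic symplectic canonical classes --- this is what rules out, for instance, a K\"ahler form of general type on a manifold merely homeomorphic to $\mathbb{CP}^{2}\#8\,\overline{\mathbb{CP}^{2}}$. It is also the point at which the hypothesis is sharp: if $2\chi+3\tau<0$ then $c_{1}$ is spacelike and the light-cone step fails, and indeed on $\mathbb{CP}^{2}$ blown up at ten or more points there are K\"ahler forms with $c_{1}\cdot\omega<0$. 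Granting this structural input, what remains is the elementary positive-cone geometry of the first paragraph.
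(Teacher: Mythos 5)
The paper never proves this Proposition: it is quoted from LeBrun [9] with no argument supplied, so there is no internal proof to measure yours against. Judged on its own terms, your strategy is the standard (and, as far as the literature goes, essentially the intended) one: use Taubes' nonvanishing together with the McDuff/Li--Liu classification to replace an arbitrary symplectic form by a K\"ahler form on a rational or minimal elliptic ruled complex surface with $c_{1}^{2}=2\chi+3\tau\geq 0$, and then run the $b_{+}=1$ light-cone argument. Your Riemann--Roch step in the rational case is correct: $h^{0}(-K)\geq \chi(\mathcal{O})+K^{2}=1+K^{2}\geq 1$ because $h^{2}(-K)=h^{0}(2K)=0$, so $-K$ is a nonzero effective class and pairs strictly positively with every K\"ahler class.

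There is, however, one genuine gap: the claim that $-K_{X}$ is effective ``classically for the elliptic ruled surfaces'' is false for one of them. Take $X=\mathbb{P}(E_{1})\to C$ with $C$ elliptic and $E_{1}$ the indecomposable rank-two bundle of degree one (the ruled surface with invariant $e=-1$). Then $\pi_{*}\mathcal{O}(-K_{X})\cong S^{2}E_{1}\otimes(\det E_{1})^{-1}$, and Atiyah's decomposition $E_{1}\otimes E_{1}^{\vee}\cong\bigoplus_{i=1}^{4}T_{i}$ over the $2$-torsion line bundles $T_{i}$ shows that $S^{2}E_{1}\otimes(\det E_{1})^{-1}$ is the direct sum of the three \emph{nontrivial} $T_{i}$; hence $h^{0}(-K_{X})=0$ and $|-K_{X}|=\emptyset$. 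The conclusion $c_{1}\cdot\kappa>0$ is still true on this surface, but your effectivity argument does not establish it. The repair stays entirely inside your light-cone framework: for any minimal ruled surface the fiber class $f$ satisfies $f^{2}=0$, $c_{1}\cdot f=f^{2}+2=2>0$ by adjunction, and $f\cdot\kappa>0$ for every K\"ahler class $\kappa$; since $c_{1}^{2}=0$ and two nonzero classes of nonnegative square pair positively only when they lie in the closure of the same component of the positive cone, $c_{1}\cdot f>0$ places $c_{1}$ in the forward cone determined by $\kappa$, whence $c_{1}\cdot\kappa>0$ (equality being excluded because $\kappa^{\perp}$ is negative definite). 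With that one substitution --- or, alternatively, noting that $|-2K_{X}|\neq\emptyset$ even on $\mathbb{P}(E_{1})$ --- your argument is complete.
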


This gives the following Corollary, which was pointed out in [2]. 

\begin{Corollary}
Let $M$ be a smooth four manifold which is diffeomrophic to a rational or ruled surface. 
Then an almost-K\"ahler Einstien metric $(M, g, \omega)$ is K\"ahler-Einstein with positive scalar curvature.
\end{Corollary}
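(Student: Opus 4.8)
The plan is to deduce this from Corollary 1 together with LeBrun's Proposition 3, the only extra input being that the topological hypothesis $2\chi+3\tau\ge 0$ is automatic for compact Einstein four-manifolds. So the first step is to record that, since $g$ is Einstein we have $ric_{0}=0$, and the Gauss--Bonnet and signature formulas combine to give $2\chi(M)+3\tau(M)=\frac{1}{4\pi^{2}}\int_{M}\big(\frac{s^{2}}{24}+2|W_{+}|^{2}\big)\,d\mu\ge 0$; this is just one direction of the Hitchin--Thorpe inequality. Hence the hypotheses of Proposition 3 are met, and since $M$ is diffeomorphic to a rational or ruled surface we conclude $c_{1}\cdot\omega>0$ for the given symplectic form $\omega$ (indeed for every symplectic form on $M$).

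The second step is immediate: because $c_{1}\cdot\omega>0\ge 0$, Corollary 1 applies and shows that $(M,g,\omega)$ is K\"ahler--Einstein.

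The third step is to pin down the sign of the scalar curvature. On a K\"ahler surface one has $s^{*}=s$, so the identity $4\pi\,c_{1}\cdot\omega=\int_{M}\frac{s+s^{*}}{2}\,d\mu$ used in the proof of Theorem 1 reduces to $4\pi\,c_{1}\cdot\omega=\int_{M}s\,d\mu$. Since an Einstein metric has constant scalar curvature, this yields $s=\dfrac{4\pi\,c_{1}\cdot\omega}{\operatorname{Vol}(M,g)}>0$, completing the argument.

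I do not expect a genuine obstacle here; the proof is essentially a concatenation of results already available. The two points that deserve a line of care are: confirming that $2\chi+3\tau\ge 0$ really does hold unconditionally for compact Einstein four-manifolds, so that Proposition 3 is applicable with no further topological assumption; and checking the normalization in the relation between $c_{1}\cdot\omega$ and $\int_{M}s\,d\mu$ on the K\"ahler surface, so that the final positivity of $s$ comes out with the correct sign.
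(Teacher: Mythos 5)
Your proposal is correct and follows the paper's own argument step for step: Hitchin--Thorpe gives $2\chi+3\tau\ge 0$ for the Einstein metric, Proposition 3 then yields $c_{1}\cdot\omega>0$, Corollary 1 gives the K\"ahler--Einstein conclusion, and the K\"ahler identity $4\pi c_{1}\cdot[\omega]=\int_{M}s\,d\mu$ forces $s>0$. The only cosmetic difference is that you derive that last identity from $4\pi c_{1}\cdot\omega=\int_{M}\frac{s+s^{*}}{2}\,d\mu$ via $s^{*}=s$, whereas the paper quotes it directly.
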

\begin{proof}
From 
\[2\chi+3\tau=\frac{1}{4\pi^{2}}\int_{M}\left(\frac{s^{2}}{24}+2|W_{+}|^{2}-\frac{|ric_{0}|^{2}}{2}\right)d\mu, \]
we get $2\chi+3\tau\geq 0$ for an Einstein metric. 
By Corollary 1, $(M, g, \omega)$ is K\"ahler-Einstein since $c_{1}\cdot\omega>0$ by Proposition 3. 
For a K\"ahler surface, we have 
\[4\pi c_{1}\cdot[\omega]=\int_{M}s d\mu. \]
From this, we get $s>0$. 
\end{proof}

We prove Proposition 1 in case of compact almost-K\"ahler Einstein four manifolds using the curvature formula of Blair connection given in [7]. 
Let $(M, g, \omega)$ be an almost-K\"ahler four manifold. We consider 
\[\tilde{\nabla}_{X}Y:=\nabla_{X}Y-\frac{1}{2}J(\nabla_{X}J)Y,\]
where $\nabla$ is Levi-Civita connection. Then $\tilde{\nabla}$ induces the connection on the anti-canonical line bundle. 
Then the curvature of $\tilde{\nabla}$ on the anti-canonical line bundle of an almost-K\"ahler Einstien four manifold is given by 
\[iF_{\tilde{\nabla}}=W_{+}(\omega)^{\perp}+\frac{s+s^{*}}{8}\omega+\frac{s-s^{*}}{8}\hat{\omega}.\]
The anti-self-dual 2-form $\hat{\omega}$ is defined on $U$ and $|\hat{\omega}|=\sqrt{2}$, where 
$U=\{p\in M|s^{*}(p)\neq s(p)\}$.

\begin{Corollary}
Let $(M, g, \omega)$ be a compact almost-K\"ahler Einstein four-manifold. Then 
\[\int_{M}\left(\frac{ss^{*}}{8}-\frac{s^{2}}{24}\right)d\mu=\int_{M}(2|W_{+}|^{2}-|W_{+}(\omega)^{\perp}|^{2})d\mu.\]
\end{Corollary}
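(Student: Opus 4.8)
The plan is to apply Chern--Weil theory to the connection $\tilde{\nabla}$ on the anti-canonical line bundle, whose first Chern class is $c_{1}=c_{1}(M)$, and then to feed the result into the Gauss--Bonnet and signature formulas. Since $iF_{\tilde{\nabla}}$ is the real $2$-form displayed just above the statement, we have $c_{1}=\frac{1}{2\pi}[iF_{\tilde{\nabla}}]$ in de Rham cohomology, hence
\[c_{1}^{2}=\frac{1}{4\pi^{2}}\int_{M}(iF_{\tilde{\nabla}})\wedge(iF_{\tilde{\nabla}}).\]
So the first step is to evaluate this integrand pointwise.

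For that, I would decompose $iF_{\tilde{\nabla}}$ into its self-dual and anti-self-dual parts, which by the displayed formula are $\Phi_{+}=W_{+}(\omega)^{\perp}+\frac{s+s^{*}}{8}\omega$ and $\Phi_{-}=\frac{s-s^{*}}{8}\hat{\omega}$; the latter is a priori only defined on $U$, but since $s-s^{*}$ vanishes on $M\setminus U$ it extends as the (globally smooth) anti-self-dual part of $iF_{\tilde{\nabla}}$. On an oriented Riemannian $4$-manifold one has $\alpha_{+}\wedge\alpha_{+}=|\alpha_{+}|^{2}d\mu$ and $\alpha_{-}\wedge\alpha_{-}=-|\alpha_{-}|^{2}d\mu$ for self-dual, resp. anti-self-dual, $2$-forms, while self-dual and anti-self-dual forms are wedge-orthogonal. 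Using in addition that $W_{+}(\omega)^{\perp}$ is by definition pointwise orthogonal to $\omega$, together with $|\omega|^{2}=|\hat{\omega}|^{2}=2$, one computes
\[(iF_{\tilde{\nabla}})\wedge(iF_{\tilde{\nabla}})=\left(|W_{+}(\omega)^{\perp}|^{2}+2\Big(\tfrac{s+s^{*}}{8}\Big)^{2}-2\Big(\tfrac{s-s^{*}}{8}\Big)^{2}\right)d\mu=\left(|W_{+}(\omega)^{\perp}|^{2}+\tfrac{ss^{*}}{8}\right)d\mu,\]
using $(s+s^{*})^{2}-(s-s^{*})^{2}=4ss^{*}$. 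Integrating gives $4\pi^{2}c_{1}^{2}=\int_{M}\big(|W_{+}(\omega)^{\perp}|^{2}+\tfrac{ss^{*}}{8}\big)d\mu$.

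Next I would bring in the two standard topological identities. For an almost-complex $4$-manifold $c_{1}^{2}=2\chi+3\tau$, while from Gauss--Bonnet and the signature theorem, together with $ric_{0}=0$ for an Einstein metric, the formula for $2\chi+3\tau$ quoted in the proof of Corollary~2 reduces to
\[2\chi+3\tau=\frac{1}{4\pi^{2}}\int_{M}\left(\frac{s^{2}}{24}+2|W_{+}|^{2}\right)d\mu.\]
Equating the two resulting expressions for $4\pi^{2}c_{1}^{2}$ gives
\[\int_{M}\left(\frac{s^{2}}{24}+2|W_{+}|^{2}\right)d\mu=\int_{M}\left(|W_{+}(\omega)^{\perp}|^{2}+\frac{ss^{*}}{8}\right)d\mu,\]
and moving terms across yields the claimed identity.

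The argument is essentially bookkeeping once the curvature formula for $\tilde{\nabla}$ is in hand; the only points demanding care — and the most likely source of a stray sign or factor — are fixing the Chern--Weil normalization, getting the signs in $\alpha_{\pm}\wedge\alpha_{\pm}$ right, and checking that $\frac{s-s^{*}}{8}\hat{\omega}$ really is a smooth global $2$-form. As a consistency check one can note that in the K\"ahler--Einstein case ($s^{*}=s$, $W_{+}(\omega)^{\perp}=0$, $|W_{+}|^{2}=s^{2}/24$) both sides collapse to $\int_{M}\frac{s^{2}}{12}\,d\mu$, and that the conclusion is compatible with the inequality $\int_{M}\frac{s^{2}}{24}\,d\mu\ge\int_{M}|W_{+}|^{2}\,d\mu$ stated in the introduction.
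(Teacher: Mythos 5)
Your proposal is correct and follows essentially the same route as the paper: compute $c_{1}^{2}$ by Chern--Weil for the Blair connection using the displayed curvature formula, so that $4\pi^{2}c_{1}^{2}=\int_{M}\bigl(|W_{+}(\omega)^{\perp}|^{2}+\frac{ss^{*}}{8}\bigr)d\mu$, and equate this with $c_{1}^{2}=2\chi+3\tau=\frac{1}{4\pi^{2}}\int_{M}\bigl(\frac{s^{2}}{24}+2|W_{+}|^{2}\bigr)d\mu$ for an Einstein metric. Your added remarks on the smooth extension of $\frac{s-s^{*}}{8}\hat{\omega}$ and the K\"ahler--Einstein consistency check are fine but do not change the argument.
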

\begin{proof}
For a compact almost-K\"ahler Einstein four-manifold, we have 
\[c_{1}^{2}=\frac{1}{4\pi^{2}}\int_{M}(|iF_{\nabla}^{+}|^{2}-|iF_{\nabla}^{-}|^{2})d\mu\]
\[=\frac{1}{4\pi^{2}}\int_{M}\left(|W_{+}(\omega)^{\perp}|^{2}+\left(\frac{s+s^{*}}{8}\right)^{2}|\omega|^{2}
-\left(\frac{s-s^{*}}{8}\right)^{2}|\hat{\omega}|^{2}   \right)d\mu.\]

Using 
\[c_{1}^{2}=2\chi+3\tau=\frac{1}{4\pi^{2}}\int_{M}\left(\frac{s^{2}}{24}+2|W_{+}|^{2}\right)d\mu,\]
the result follows.

\end{proof}

\vspace{20pt}

\section{\large\textbf{Integral curvature inequality}}\label{S:Intro}

Using Seiberg-Witten theory, LeBrun showed the following result [6].

\begin{Theorem}
(LeBrun) Let $(M, g)$ be a compact, Einstein four manifold. 
Assume $M$ admits an almost-complex structure $J$ for which the Seiberg-Witten invariant is nonzero.
Suppose $M$ is not finitely covered by $T^{4}$. Then with respect to the orientation by $J$, $\chi\geq 3\tau$. 
Moreover, $\chi=3\tau$ if and only if up to rescaling, the universal cover of $(M, g)$ is complex-hyperbolic 2-space with the standard metric. 
\end{Theorem}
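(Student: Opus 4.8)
The plan is to rewrite the topological quantity $\chi-3\tau$ as a curvature integral and then to control that integral by the Seiberg--Witten curvature estimate. First, by the Gauss--Bonnet and signature formulas together with the Einstein condition $ric_{0}=0$, one has
\[\chi-3\tau=\frac{1}{8\pi^{2}}\int_{M}\left(\frac{s^{2}}{24}-|W_{+}|^{2}+3|W_{-}|^{2}\right)d\mu,\]
while, as in the excerpt, $c_{1}^{2}=2\chi+3\tau=\frac{1}{4\pi^{2}}\int_{M}\left(\frac{s^{2}}{24}+2|W_{+}|^{2}\right)d\mu$ with $c_{1}=c_{1}(J)$. Since the intersection form is negative definite on anti-self-dual classes, the self-dual part $c_{1}^{+}$ of the $g$-harmonic representative of $c_{1}$ satisfies $(c_{1}^{+})^{2}\geq c_{1}^{2}$.

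Next I would invoke the hypothesis: nonvanishing of the Seiberg--Witten invariant makes $c_{1}(J)$ a monopole class, so LeBrun's Seiberg--Witten estimate $\int_{M}s^{2}\,d\mu\geq 32\pi^{2}(c_{1}^{+})^{2}$ applies. Combining this with $(c_{1}^{+})^{2}\geq c_{1}^{2}=2\chi+3\tau$ and the formula for $2\chi+3\tau$ yields $\int_{M}s^{2}\,d\mu\geq\frac{1}{3}\int_{M}s^{2}\,d\mu+16\int_{M}|W_{+}|^{2}\,d\mu$, hence $\int_{M}s^{2}\,d\mu\geq 24\int_{M}|W_{+}|^{2}\,d\mu$. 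Substituting back gives $\chi-3\tau\geq\frac{3}{8\pi^{2}}\int_{M}|W_{-}|^{2}\,d\mu\geq 0$, which is the inequality.

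For the rigidity statement I would trace when both inequalities are sharp. Equality forces $W_{-}\equiv 0$ (so $g$ is self-dual) and $\int_{M}s^{2}\,d\mu=24\int_{M}|W_{+}|^{2}\,d\mu$, hence equality in LeBrun's estimate together with $(c_{1}^{+})^{2}=c_{1}^{2}$. The rigidity built into the estimate then forces $g$ to be Kähler for a complex structure compatible with the $\mathrm{spin}^{c}$ structure, so $g$ is Kähler--Einstein and self-dual; for such a metric $|W_{+}|^{2}=s^{2}/24$ pointwise and the curvature operator has eigenvalue $\frac{s}{4}$ on the span of the Kähler form, $0$ on its orthogonal complement in $\Lambda^{+}$, and $\frac{s}{12}$ on $\Lambda^{-}$, so it has constant holomorphic sectional curvature, i.e.\ is locally a complex space form. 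One checks $s$ is a nonzero constant: if $s\equiv 0$ the estimate forces $W_{+}\equiv 0$ and then equality forces $W_{-}\equiv 0$, i.e.\ $g$ flat, which is exactly the excluded case that $M$ is finitely covered by $T^{4}$; and positive scalar curvature is incompatible with the Seiberg--Witten hypothesis. Hence $s<0$, so the universal cover $\widetilde{M}$ is, up to rescaling, complex hyperbolic $2$-space with its standard metric. The converse is clear, since compact quotients of $\mathbb{CH}^{2}$ satisfy $c_{1}^{2}=3c_{2}$, i.e.\ $\chi=3\tau$.

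The main obstacle is the deep Seiberg--Witten input: proving the curvature estimate $\int_{M}s^{2}\,d\mu\geq 32\pi^{2}(c_{1}^{+})^{2}$ and, more delicately, pinning down its equality case, which rests on the Weitzenböck formula for the monopole equations, an a priori estimate on the solution spinor, a perturbation argument, and -- when $b_{+}=1$ -- care with the chamber dependence of the invariant. By comparison, the topological reductions, the elementary algebra, and the final appeal to the classification of self-dual Kähler--Einstein four-manifolds are routine.
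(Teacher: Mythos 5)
The paper does not actually prove this statement: Theorem 2 is quoted verbatim from LeBrun's \emph{Einstein metrics and Mostow rigidity} (reference [6]) and used as a black box, so there is no in-paper argument to compare against. That said, your outline is a faithful reconstruction of LeBrun's original proof: the decomposition $\chi-3\tau=\frac{1}{8\pi^{2}}\int(\frac{s^{2}}{24}-|W_{+}|^{2}+3|W_{-}|^{2})\,d\mu$ for Einstein metrics, the chain $\int s^{2}\,d\mu\geq 32\pi^{2}(c_{1}^{+})^{2}\geq 32\pi^{2}c_{1}^{2}=32\pi^{2}(2\chi+3\tau)$ leading to $\int s^{2}\,d\mu\geq 24\int|W_{+}|^{2}\,d\mu$, and the equality analysis forcing a self-dual K\"ahler--Einstein metric of constant negative scalar curvature (hence locally $\mathbb{CH}^{2}$), with the flat case excluded by the $T^{4}$ hypothesis, are exactly the steps in [6]. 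Your algebra checks out, and you correctly identify where the real content lies: the Weitzenb\"ock-based curvature estimate for monopole classes, its equality case (which is what produces the parallel spinor/K\"ahler structure), and the chamber issue when $b_{+}=1$. Those are substantial inputs that your sketch defers to LeBrun, but since the paper itself defers the entire theorem, this is an acceptable level of detail; it is also worth noting that the inequality $\int\frac{s^{2}}{24}\,d\mu\geq\int|W_{+}|^{2}\,d\mu$ you derive via Seiberg--Witten theory is precisely what the paper's Theorem 3 re-proves for almost-K\"ahler metrics with $\delta W_{+}=0$ \emph{without} gauge theory, which is the point of Section 3.
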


The following result was also shown in [10]. 

\begin{Theorem}
Let $(M, g, \omega)$ be a compact almost-K\"ahler four-manifold with $\delta W_{+}=0$. 
Then $\int_{M}\frac{s^{2}}{24}d\mu\geq \int_{M}|W_{+}|^{2}d\mu$, 
with equality if and only if $(M, g, \omega)$ is K\"ahler with constant scalar curvature. 
\end{Theorem}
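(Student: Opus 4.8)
The plan is to combine the two displayed integral identities that come out of LeBrun's work, namely Proposition~1 and the pointwise bound in Lemma~1, and then feed in the almost-K\"ahler identity $s^{*}-s=|\nabla\omega|^{2}$ that was derived in the proof of Theorem~1. Starting from Proposition~1,
\[\int_{M}sW_{+}(\omega,\omega)\,d\mu=8\int_{M}\Bigl(|W_{+}|^{2}-\tfrac12|W_{+}(\omega)^{\perp}|^{2}\Bigr)d\mu,\]
I would rewrite the left-hand side using $W_{+}(\omega,\omega)=\tfrac{s^{*}}{2}-\tfrac{s}{6}$ and $\tfrac12\Delta|\omega|^{2}=0$, so that $sW_{+}(\omega,\omega)$ becomes an explicit combination of $s^{2}$ and $s\,|\nabla\omega|^{2}$. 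This already expresses $\int_{M}|W_{+}|^{2}$ in terms of $\int_{M}s^{2}$, $\int_{M}s|\nabla\omega|^{2}$, and $\int_{M}|W_{+}(\omega)^{\perp}|^{2}$.

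Next I would bring in Lemma~1 in the sharper form $|W_{+}|^{2}-|W_{+}(\omega)^{\perp}|^{2}\ge \tfrac38[W_{+}(\omega,\omega)]^{2}$. Substituting $W_{+}(\omega,\omega)=\tfrac{s^{*}}{2}-\tfrac{s}{6}$ and again using $s^{*}=s+|\nabla\omega|^{2}$ turns the right side into $\tfrac38\bigl(\tfrac{s}{3}+\tfrac{|\nabla\omega|^{2}}{2}\bigr)^{2}$. The idea is that this pointwise inequality, once integrated and combined with the identity from the previous paragraph, should make the unwanted terms $\int_{M}|W_{+}(\omega)^{\perp}|^{2}$ drop out (or combine into something manifestly nonnegative), leaving an inequality of the shape $\int_{M}\tfrac{s^{2}}{24}\,d\mu\ge\int_{M}|W_{+}|^{2}\,d\mu$ plus a nonnegative remainder built from $\int_{M}s|\nabla\omega|^{2}$ and $\int_{M}|\nabla\omega|^{4}$. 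Here one likely needs the fact, established in Theorem~1, that when $s\ge 0$ the manifold is already K\"ahler (so $\nabla\omega\equiv0$ and everything is an equality), which lets me assume $s<0$ or handle the sign of $\int_{M}s|\nabla\omega|^{2}$; alternatively Proposition~2, $\int_{M}W_{+}(\omega,\omega)(W_{+}(\omega,\omega)-\tfrac s3)\,d\mu\le0$, rewritten via $s^{*}-s=|\nabla\omega|^{2}$ gives $\int_{M}(s^{*}-s)(3s^{*}-s)\,d\mu\le 0$, i.e. $\int_{M}|\nabla\omega|^{2}(2|\nabla\omega|^{2}+2s)\,d\mu\le0$, which controls exactly the cross term $\int_{M}s|\nabla\omega|^{2}$ in terms of $-\int_{M}|\nabla\omega|^{4}$.

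For the equality case: tracing back, equality forces $|\nabla\omega|^{2}\equiv0$ pointwise (from the nonnegative remainder vanishing) together with equality in Lemma~1, i.e. $W_{+}(\omega)^{\perp}\equiv0$; the first condition already says $(M,g,\omega)$ is K\"ahler, and then $\delta W_{+}=0$ on a K\"ahler surface forces $s$ constant, as noted at the end of the proof of Theorem~1. Conversely, on a K\"ahler surface with $s$ constant one has $W_{+}=\tfrac{s}{6}\,\mathrm{diag}(2,-1,-1)$ in a frame adapted to $\omega$, so $|W_{+}|^{2}=\tfrac{s^{2}}{6}$ and $\int_{M}|W_{+}|^{2}=\int_{M}\tfrac{s^{2}}{6}$; wait, that overshoots, so I would instead recall the standard K\"ahler-surface identity $|W_{+}|^{2}=\tfrac{s^{2}}{24}$ in the correctly normalized conventions of this paper and check it directly against $s^{*}=s$, $W_{+}(\omega,\omega)=\tfrac s3$, $W_{+}(\omega)^{\perp}=0$ in Proposition~1, which indeed yields $\int_{M}\tfrac{s^{2}}{24}=\int_{M}|W_{+}|^{2}$.

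The main obstacle I anticipate is purely bookkeeping: getting all the numerical constants consistent across the three inputs (the $\tfrac{s}{12}|\omega|^{2}$ term with $|\omega|^{2}=2$, the factor $8$ in Proposition~1, the $\tfrac38$ in Lemma~1) so that the leftover terms really do assemble into something nonnegative rather than indefinite; in particular I expect the delicate point to be showing $\int_{M}(s|\nabla\omega|^{2}+\tfrac12|\nabla\omega|^{4})\,d\mu$ has the right sign, which is precisely where Proposition~2 (equivalently the constant-scalar-curvature dichotomy of Theorem~1) must be invoked. Everything else is substitution of the two algebraic identities $W_{+}(\omega,\omega)=\tfrac{s^{*}}{2}-\tfrac s6$ and $s^{*}-s=|\nabla\omega|^{2}$ into the already-established integral (in)equalities.
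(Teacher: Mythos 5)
Your plan is essentially the paper's proof: rewrite Proposition 1 as $2\int_{M}|W_{+}|^{2}d\mu-\int_{M}|W_{+}(\omega)^{\perp}|^{2}d\mu=\int_{M}(\tfrac{ss^{*}}{8}-\tfrac{s^{2}}{24})d\mu$ via $W_{+}(\omega,\omega)=\tfrac{s^{*}}{2}-\tfrac{s}{6}$, add the pointwise bound of Lemma 1, and identify the remainder. The bookkeeping you flag as the main obstacle in fact closes cleanly, since $\tfrac{s^{2}}{12}+\tfrac{3}{8}(\tfrac{s^{*}}{2}-\tfrac{s}{6})^{2}-\tfrac{ss^{*}}{8}=\tfrac{9(s^{*}-s)^{2}}{96}$ is a perfect square with no $s\,|\nabla\omega|^{2}$ cross term, so Proposition 2, the sign dichotomy from Theorem 1, and the equality clause of Lemma 1 are all unnecessary; equality forces $s^{*}\equiv s$ directly, which is the K\"ahler condition.
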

\begin{proof}
By Proposition 1, we have 
\[\int_{M}|W_{+}|^{2}d\mu=\int_{M}\left((|W_{+}(\omega)^{\perp}|^{2}-|W_{+}|^{2})+\frac{ss^{*}}{8}d\mu-\frac{s^{2}}{24}\right)d\mu.\]
Using Lemma 1, we get 
\[|W_{+}|^{2}-|W_{+}(\omega)^{\perp}|^{2}\geq \frac{3}{8}|W_{+}(\omega, \omega)|^{2}.\]
Then we get 
\[\int_{M}\frac{s^{2}}{24}d\mu-\int_{M}|W_{+}|^{2}d\mu
\geq\int_{M}\left(\frac{s^{2}}{24}+\frac{3}{8}|W_{+}(\omega, \omega)|^{2}-\frac{ss^{*}}{8}+\frac{s^{2}}{24}\right)d\mu\]
\[=\int_{M}\left(\frac{s^{2}}{12}+\frac{3}{8}\left(\frac{s^{*}}{2}-\frac{s}{6}\right)^{2}-\frac{ss^{*}}{8}\right)d\mu
=\int_{M}\frac{9(s^{*}-s)^{2}}{96}d\mu\geq 0.\]
Moreover, $\int_{M}\frac{s^{2}}{24}d\mu=\int_{M}|W_{+}|^{2}d\mu$ if and only if 
$s^{*}\equiv s$, which implies $(M, g, \omega)$ is K\"ahler. 
\end{proof}

Using this, we get the following result in case of almost-K\"ahler Einstein four manifolds without using Seiberg-Witten theory. 

\begin{Corollary}
Let $(M, g, \omega)$ be a compact almost-K\"ahler Einstein four-manifold. 
Then we have $\chi\geq 3\tau$, with
equality if and only if $(M, g, \omega)$ is self-dual K\"ahler-Einstein. 
\end{Corollary}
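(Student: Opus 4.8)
The plan is to combine Theorem 3 with the standard Gauss--Bonnet and signature formulas for four-manifolds, exploiting the fact that an Einstein metric makes $ric_0 = 0$ so that the only difference between the integrand of $2\chi+3\tau$ and $\chi-3\tau$ is a sign on the Weyl pieces. Concretely, I would start from
\[
\chi - 3\tau = \frac{1}{4\pi^2}\int_M\left(\frac{s^2}{24} + 2|W_-|^2 - \frac{|ric_0|^2}{2}\right)d\mu,
\]
which for an Einstein metric (here with the orientation induced by $J$, so $W_+$ and $W_-$ swap roles relative to the opposite orientation) reduces to $\chi-3\tau = \frac{1}{4\pi^2}\int_M\bigl(\frac{s^2}{24} + 2|W_-|^2\bigr)d\mu$. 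The delicate point is orientation bookkeeping: Theorem 3 gives an inequality involving $W_+$ for the $J$-orientation, so I would instead write the Gauss--Bonnet/signature combination in the form that isolates $W_+$. Using $2\chi+3\tau = \frac{1}{4\pi^2}\int_M(\frac{s^2}{24}+2|W_+|^2)d\mu$ (valid for Einstein, $ric_0=0$) together with $\chi = \frac{1}{4\pi^2}\int_M(\frac{s^2}{24}+\frac{|W_+|^2+|W_-|^2}{2})d\mu$ and $\tau = \frac{1}{12\pi^2}\int_M(|W_+|^2-|W_-|^2)d\mu$, a short linear-algebra manipulation expresses $\chi-3\tau$ as a combination of $\int_M \frac{s^2}{24}$ and $\int_M |W_+|^2$ alone when one also feeds in the Einstein constraint.

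The cleanest route I would actually pursue: since an Einstein metric has $\delta W_+ = 0$ (Bianchi), Theorem 3 applies and gives $\int_M |W_+|^2 d\mu \leq \int_M \frac{s^2}{24}d\mu$. Then I would observe that for an Einstein four-manifold with the $J$-orientation,
\[
\chi - 3\tau = \frac{1}{4\pi^2}\int_M\left(\frac{s^2}{24} + 2|W_-|^2 - 0\right)d\mu
\]
does not directly involve $W_+$, so instead I would use the opposite-orientation identity, or equivalently subtract: $(2\chi+3\tau) - (\chi-3\tau) = \chi + 6\tau = \frac{1}{4\pi^2}\int_M(2|W_+|^2 - 2|W_-|^2)d\mu$, hence $\chi - 3\tau = (2\chi+3\tau) - (\chi+6\tau) + \text{(correction)}$; more efficiently, combine $2\chi+3\tau = \frac{1}{4\pi^2}\int_M(\frac{s^2}{24}+2|W_+|^2)$ with $4\tau = \frac{1}{4\pi^2}\int_M \frac{4}{3}(|W_+|^2-|W_-|^2)$ to eliminate $|W_-|^2$ via $\chi$. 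The upshot is a clean identity of the shape
\[
\chi - 3\tau = \frac{1}{4\pi^2}\int_M\left(\frac{s^2}{24} - |W_+|^2 + (\text{manifestly nonnegative terms})\right)d\mu,
\]
at which point Theorem 3 immediately yields $\chi \geq 3\tau$.

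For the equality case I would trace back through both inputs. Equality in $\chi \geq 3\tau$ forces equality in Theorem 3, which by that theorem's equality clause means $(M,g,\omega)$ is Kähler with constant scalar curvature, and it simultaneously forces the vanishing of whatever extra nonnegative terms ($|ric_0|^2$ is already zero by Einstein, but there may be a $|W_-|^2$ or a $W_-$-trace-free term) appeared in the identity, which pins down $W_-$ (in the $J$-orientation) to its Kähler form — i.e. the metric is anti-self-dual in the orientation opposite to $J$, equivalently self-dual in the $J$-orientation after accounting for the Kähler structure. So the conclusion is that $(M,g,\omega)$ is self-dual Kähler--Einstein, matching the statement of Theorem 2 about the complex-hyperbolic model. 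The main obstacle I anticipate is purely the orientation/sign accounting: making sure the $W_+$ of Theorem 3 (which is tied to the symplectic orientation) lines up correctly with the $W_\pm$ appearing in the topological formulas, and confirming that after the linear combination the leftover curvature terms genuinely have a sign. Once the identity is written correctly, the inequality is immediate and the equality analysis is just bookkeeping.
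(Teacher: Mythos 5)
Your strategy is exactly the paper's: combine the Gauss--Bonnet and signature integrands to get $\chi-3\tau=\frac{1}{8\pi^{2}}\int_{M}\left(\frac{s^{2}}{24}-|W_{+}|^{2}+3|W_{-}|^{2}-\frac{|ric_{0}|^{2}}{2}\right)d\mu$, drop $ric_{0}$ by the Einstein condition, and apply Theorem 3 (which applies because Einstein implies $\delta W_{+}=0$), with equality forcing both $W_{-}=0$ and the K\"ahler equality case of Theorem 3. Note only that your first displayed identity is really the formula for $2\chi-3\tau$ rather than $\chi-3\tau$, the correct overall constant in your final identity is $\frac{1}{8\pi^{2}}$ with the ``manifestly nonnegative term'' being $3|W_{-}|^{2}$, and the orientation bookkeeping you worry about is vacuous since the $W_{+}$ of Theorem 3 and of the signature formula are both taken in the symplectic orientation; none of these slips affects the argument.
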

\begin{proof}
From the following formulas, 
\[\chi=\frac{1}{8\pi^{2}}\int_{M}\left(\frac{s^{2}}{24}+|W_{+}|^{2}+|W_{-}|^{2}-\frac{|ric_{0}|^{2}}{2}\right)d\mu,\]
\[\tau=\frac{1}{12\pi^{2}}\int_{M}(|W_{+}|^{2}-|W_{-}|^{2})d\mu,\]
we have 
\[\chi-3\tau=\frac{1}{8\pi^{2}}\int_{M}\left(\frac{s^{2}}{24}-|W_{+}|^{2}+3|W_{-}|^{2}-\frac{|ric_{0}|^{2}}{2}\right)d\mu.\]
For an Einstein metric, we have $\delta W_{+}=0$, and therefore by Theorem 3, we get 
\[\chi-3\tau\geq0,\]
with equality if and only if $(M, g, \omega)$ is K\"ahler-Einstein and self-dual.
\end{proof}

\begin{Corollary}
Let $(M, g, \omega)$ be a compact almost-K\"ahler Einstein four manifold. 
Suppose $b_{-}=0$. Then $(M, g, \omega)$ is self-dual K\"ahler-Einstein and $b_{1}=0$. 
\end{Corollary}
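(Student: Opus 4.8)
The plan is to feed the extra topological hypothesis $b_{-}=0$ into the combination $\chi-3\tau$ that Corollary 4 already controls. First I would record the elementary bookkeeping: for a compact connected oriented four-manifold one has $\chi=2-2b_{1}+b_{2}$ and $\tau=b_{+}-b_{-}$, with $b_{\pm}$ and $\tau$ taken relative to the orientation determined by $J$. The hypothesis $b_{-}=0$ gives $b_{2}=b_{+}$ and $\tau=b_{+}$, hence
\[\chi-3\tau=2-2b_{1}+b_{+}-3b_{+}=2-2b_{1}-2b_{+}.\]

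Next I would note that $b_{+}\geq 1$: since $\omega$ is symplectic, $\int_{M}\omega\wedge\omega>0$, so $[\omega]\neq 0$ in $H^{2}(M;\mathbb{R})$; and since $\omega$ is a self-dual harmonic $2$-form, its class lies in a maximal positive-definite subspace, so $b_{+}\geq 1$. An Einstein metric satisfies $\delta W_{+}=0$, so Corollary 4 applies and yields $\chi-3\tau\geq 0$. Combining the displays,
\[0\leq\chi-3\tau=2-2b_{1}-2b_{+}\leq 2-2b_{+}\leq 0,\]
which forces $b_{1}=0$, $b_{+}=1$, and $\chi=3\tau$. The equality clause of Corollary 4 then gives that $(M,g,\omega)$ is self-dual K\"ahler-Einstein, and $b_{1}=0$ has just been established, so the proof is complete.

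I do not expect a genuine obstacle: the analytic substance sits entirely in Theorem 3 and Corollary 4, and this statement follows by adjoining the purely topological facts $b_{-}=0$ and $b_{+}\geq 1$. The one point meriting care is the consistency of orientation conventions — $\tau$, $b_{\pm}$, and the splitting $\Lambda^{2}=\Lambda^{+}\oplus\Lambda^{-}$ must all be taken with respect to the orientation induced by $J$, which is precisely the orientation used in Corollary 4, so no sign ambiguity arises.
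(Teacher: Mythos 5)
Your proposal is correct and follows essentially the same route as the paper: plug $b_{-}=0$ into $\chi-3\tau$, use $b_{+}\geq 1$ (forced by the self-dual harmonic symplectic form) together with Corollary~4 to squeeze $\chi-3\tau$ to zero, and invoke the equality clause. The only cosmetic difference is that you apply Poincar\'e duality to write $b_{3}=b_{1}$ at the outset, whereas the paper keeps $b_{1}$ and $b_{3}$ separate and concludes both vanish.
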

\begin{proof}
By Corollary 4, we have $\chi\geq 3\tau$. 
Since $\chi=2-b_{1}+b_{+}-b_{3}$ and $\tau=b_{+}$, 
we have 
\[\chi-3\tau=2-b_{1}-b_{3}-2b_{+}\geq 0.\]
Since $b_{+}\geq 1$, we get $b_{1}=b_{3}=0$ and $b_{+}=1$. 
Moreover, $\chi=3\tau$. 
The result follows from Corollary 4.
\end{proof}

\begin{Corollary}
(Satoh [13]) Let $(M, g, \omega)$ be a compact almost-K\"ahler four manifold with $\delta W_{+}=0$. 
Suppose $\int_{M}s^{2}d\mu=32\pi^{2}(2\chi+3\tau)$.
Then $(M, g, \omega)$ is K\"ahler-Einstein. 
\end{Corollary}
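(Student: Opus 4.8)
The plan is to combine the Gauss--Bonnet/signature identity
\[2\chi+3\tau=\frac{1}{4\pi^{2}}\int_{M}\left(\frac{s^{2}}{24}+2|W_{+}|^{2}-\frac{|ric_{0}|^{2}}{2}\right)d\mu\]
with the integral inequality of Theorem 3. Both ingredients are already available in the excerpt, so the argument should be short. First I would substitute this identity into the hypothesis $\int_{M}s^{2}d\mu=32\pi^{2}(2\chi+3\tau)$; clearing the factor of $\pi^{2}$ and moving the $\int_{M}s^{2}d\mu$ terms to one side gives, after dividing by a constant,
\[\int_{M}|ric_{0}|^{2}d\mu=4\int_{M}|W_{+}|^{2}d\mu-\frac{1}{6}\int_{M}s^{2}d\mu.\]

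Next I would apply Theorem 3, which asserts $\int_{M}|W_{+}|^{2}d\mu\leq\frac{1}{24}\int_{M}s^{2}d\mu$, hence $4\int_{M}|W_{+}|^{2}d\mu\leq\frac{1}{6}\int_{M}s^{2}d\mu$. Combined with the displayed equality this forces $\int_{M}|ric_{0}|^{2}d\mu\leq0$, so $ric_{0}\equiv0$ and $(M,g,\omega)$ is Einstein. Moreover equality must then hold in Theorem 3, i.e. $\int_{M}|W_{+}|^{2}d\mu=\frac{1}{24}\int_{M}s^{2}d\mu$, and by the equality clause of Theorem 3 this means that $(M,g,\omega)$ is K\"ahler with constant scalar curvature. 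An Einstein metric that is also K\"ahler is K\"ahler--Einstein, which is the desired conclusion.

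I do not expect a genuine obstacle here: the only care needed is the bookkeeping of numerical constants, together with the conceptual observation that the prescribed value of $\int_{M}s^{2}d\mu$ is exactly the borderline at which both the curvature identity for $2\chi+3\tau$ and the inequality of Theorem 3 are simultaneously saturated. Since Theorem 3 requires only $\delta W_{+}=0$, which is part of the hypothesis, its equality case applies without any further input, so no extra argument is needed to pass from ``Einstein and K\"ahler'' to ``K\"ahler--Einstein''.
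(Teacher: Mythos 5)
Your proposal is correct and follows essentially the same route as the paper: substitute the Gauss--Bonnet/signature formula for $2\chi+3\tau$ into the hypothesis to obtain $\int_{M}|ric_{0}|^{2}d\mu=4\int_{M}\bigl(|W_{+}|^{2}-\tfrac{s^{2}}{24}\bigr)d\mu$, then invoke Theorem 3 to conclude $ric_{0}\equiv 0$ and equality in Theorem 3, hence K\"ahler--Einstein. Your constants check out against the paper's displayed identity.
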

\begin{proof}
From $\int_{M}s^{2}d\mu=32\pi^{2}(2\chi+3\tau)$, we get
\[\int_{M}\left(2\left(\frac{s^{2}}{24}-|W_{+}|^{2}\right)+\frac{|ric_{0}|^{2}}{2}\right)d\mu=0.\]
Since $\int_{M}\frac{s^{2}}{24}d\mu\geq\int_{M}|W_{+}|^{2}d\mu$, we get 
$\int_{M}\frac{s^{2}}{24}d\mu=\int_{M}|W_{+}|^{2}d\mu$ and $ric_{0}=0$. 
Then by Theorem 3, $(M, g, \omega)$ is K\"ahler-Einstein. 
\end{proof}

\begin{Corollary}
Let $(M, g, \omega)$ be a compact simply connected almost-K\"ahler Einstein four manifold with degenerate spectrum of $W_{+}$. 
Then $(M, g, \omega)$ is K\"ahler-Einstein. 
\end{Corollary}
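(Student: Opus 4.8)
The plan is to reduce the statement to Derdzinski's structure theorem for compact Einstein four-manifolds whose self-dual Weyl tensor has degenerate spectrum, and then to extract K\"ahlerianity via the conformal geometry of K\"ahler surfaces together with Theorem 3.

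First I would dispose of the easy cases. An Einstein metric has $\delta W_{+}=0$ and constant scalar curvature, so if $s\geq 0$ the manifold is K\"ahler by [14] (one may also note $4\pi c_{1}\cdot\omega=\int_{M}\tfrac{s+s^{*}}{2}d\mu\geq s\int_{M}d\mu\geq 0$ and apply Corollary 1), and a K\"ahler Einstein metric is K\"ahler-Einstein; hence I may assume $s<0$. I would also set aside the case $W_{+}\equiv 0$: there $\Delta\omega=0$ gives $\nabla^{*}\nabla\omega=-\tfrac{s}{3}\omega$ and $W_{+}(\omega,\omega)=0$, so $s^{*}=\tfrac{s}{3}$ and $|\nabla\omega|^{2}\equiv-\tfrac{2s}{3}$; moreover, by the Blair-connection formula, $iF_{\tilde{\nabla}}=\tfrac{s}{6}\omega+\tfrac{s}{12}\hat{\omega}$ is closed, so $\hat{\omega}$ is a harmonic anti-self-dual two-form of constant length and $(M,g,\hat{\omega})$ is almost-K\"ahler for the reversed orientation, a configuration I would rule out by hand. (This is the one genuinely delicate point; alternatively one reads ``degenerate spectrum'' as ``exactly two distinct eigenvalues at every point,'' which forces $W_{+}$ nowhere zero.) From now on $W_{+}$ is nowhere zero.

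The main step is Derdzinski's theorem: a compact oriented Einstein four-manifold with $W_{+}$ nowhere zero and with at most two distinct eigenvalues of $W_{+}$ at each point is conformally K\"ahler with respect to the given orientation, the K\"ahler metric being $\bar{g}=|W_{+}|_{g}^{2/3}g$ up to a positive constant (its K\"ahler form spanning the eigenline of the eigenvalue of $W_{+}$ of largest modulus), and $\bar{g}$ is extremal in the sense of Calabi. I would then invoke the classification of conformally-Einstein K\"ahler surfaces: a compact extremal K\"ahler surface conformal to an Einstein metric is either K\"ahler-Einstein, or lies in the Page conformal class on $\mathbb{CP}^{2}\#\overline{\mathbb{CP}^{2}}$, or in the Chen--LeBrun--Weber conformal class on $\mathbb{CP}^{2}\#2\overline{\mathbb{CP}^{2}}$; in the last two cases the Einstein representative has positive scalar curvature, contrary to $s<0$, so $\bar{g}$ must be K\"ahler-Einstein. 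In particular $\bar{g}$ is Einstein, and since $s<0$ the metric $g$ is, up to scale, the unique Einstein metric in its conformal class; hence $g$ is homothetic to $\bar{g}$, so $g$ is itself a K\"ahler and Einstein metric. Consequently $|W_{+}|_{g}^{2}=\tfrac{1}{24}s^{2}$ pointwise, so $\int_{M}\tfrac{s^{2}}{24}\,d\mu=\int_{M}|W_{+}|^{2}\,d\mu$, and the equality case of Theorem 3 shows that $(M,g,\omega)$ is K\"ahler; being Einstein as well, it is K\"ahler-Einstein.

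The hard part is assembling the two external inputs correctly---Derdzinski's theorem and the classification of conformally-Einstein K\"ahler surfaces---and in particular verifying that the complex structure furnished by Derdzinski induces the same orientation as $\omega$, so that Theorem 3 (stated for that orientation) may be applied at the end. The remaining obstacle is the edge case $W_{+}\equiv 0$, which is not covered by the conformal-to-K\"ahler dichotomy and must be excluded separately (for $s<0$ this amounts to the anti-self-dual instance of the Goldberg problem, which is why I would prefer the reading of the hypothesis that makes $W_{+}$ nowhere vanishing).
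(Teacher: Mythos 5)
There is a genuine gap: the case $W_{+}\equiv 0$ with $s<0$ is not actually handled. You acknowledge this yourself and propose to escape by reinterpreting ``degenerate spectrum'' as ``exactly two distinct eigenvalues everywhere,'' but the standard reading (at most two distinct eigenvalues at each point) includes $W_{+}\equiv 0$, and Derdzi\'nski's dichotomy for Einstein metrics is precisely ``conformally K\"ahler \emph{or} $W_{+}\equiv 0$,'' so this branch must be closed. The paper closes it without any ``by hand'' argument: from $W_{+}(\omega,\omega)=\tfrac{s^{*}}{2}-\tfrac{s}{6}$ one gets $s^{*}=\tfrac{s}{3}$ everywhere, and by Armstrong's theorem [1] a compact almost-K\"ahler four-manifold has a point where $s^{*}=s$ unless $5\chi+6\tau=0$; since $s$ is constant this forces $s=0$ unless $5\chi+6\tau=0$, and in the remaining case $5\chi+6\tau=0$ together with $2\chi+3\tau\geq 0$ gives $\chi\leq 0$, while the Gauss--Bonnet formula with $W_{+}=0$ and $ric_{0}=0$ gives $\chi\geq 0$ with $s=0$ in the equality case; either way $s=0$ and Sekigawa's theorem [14] applies. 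Your proposed Blair-connection computation is not carried to a contradiction, so as written the proof is incomplete on this branch.

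For the main case ($W_{+}$ nowhere zero) your route is correct but invokes far heavier machinery than the paper needs. You pass through the extremality of $\bar{g}=|W_{+}|^{2/3}g$, the classification of compact conformally-Einstein K\"ahler surfaces (Page and Chen--LeBrun--Weber), positivity of scalar curvature for those exceptional Einstein representatives, and Obata-type uniqueness of Einstein metrics in a conformal class --- none of which is in the paper's toolkit or bibliography --- and you must split on the sign of $s$ to make the classification bite. The paper instead uses only three soft facts: conformal invariance of $\int_{M}|W_{+}|^{2}d\mu$, the pointwise identity $|W_{+,h}|^{2}=\tfrac{s_{h}^{2}}{24}$ for the conformal K\"ahler metric $h$, and the conformal invariance of $2\chi+3\tau$ combined with $ric_{0,g}=0$. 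Chaining these with Theorem 3 gives $\int\tfrac{s_{g}^{2}}{24}d\mu_{g}\geq\int|W_{+,g}|^{2}d\mu_{g}=\int\tfrac{s_{h}^{2}}{24}d\mu_{h}$ and the reverse inequality simultaneously, so equality holds in Theorem 3 and $(M,g,\omega)$ is K\"ahler --- no case split on $s$, no classification theorem. Your final step (equality in Theorem 3 to conclude that $\omega$ itself is parallel, rather than merely that $g$ is K\"ahler for some other complex structure) is the right move and coincides with the paper's; the orientation issue you flag is also real but resolves as you expect, since Derdzi\'nski's K\"ahler structure lives on the eigenspace of $W_{+}$ for the orientation determined by $\omega$.
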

\begin{proof}
By Derdzi$\acute{n}$ski's theorem [3], there exists a K\"ahler metric $h$ which is conformally equivalent to $g$ or $W_{+}=0$. 
Suppose $W_{+}\neq 0$. 
Since $L^{2}$-norm of $W_{+}$ is conformally invariant, by Theorem 3, we have 
\[\int_{M}\frac{s^{2}_{g}}{24}d\mu_{g}\geq \int_{M}|W_{+, g}|^{2}d\mu_{g}=\int_{M}|W_{+, h}|^{2}d\mu_{h}=\int_{M}\frac{s^{2}_{h}}{24}d\mu_{h}\]
On the other hand, we have 
\[2\chi+3\tau=\frac{1}{4\pi^{2}}\int_{M}\left(\frac{s^{2}_{g}}{24}+2|W_{+, g}|^{2}-\frac{|ric_{0, g}|^{2}}{2}\right)d\mu_{g}\]
\[=\frac{1}{4\pi^{2}}\int_{M}\left(\frac{s^{2}_{h}}{24}+2|W_{+, h}|^{2}-\frac{|ric_{0, h}|^{2}}{2}\right)d\mu_{h}.\]
Since $g$ is Einstein, we have 
\[\frac{1}{4\pi^{2}}\int_{M}\left(\frac{s^{2}_{g}}{24}+2|W_{+, g}|^{2}\right)d\mu_{g}
\leq \frac{1}{4\pi^{2}}\int\left(\frac{s^{2}_{h}}{24}+2|W_{+, h}|^{2}\right)d\mu_{h}.\]
Then we get 
\[\int_{M}\frac{s^{2}_{g}}{24}d\mu_{g}= \int_{M}|W_{+, g}|^{2}d\mu_{g}=\int_{M}|W_{+, h}|^{2}d\mu_{h}=\int_{M}\frac{s^{2}_{h}}{24}d\mu_{h}\]
From Theorem 3, we get $(M, g, \omega)$ is K\"ahler-Einstein.

Suppose $W_{+}=0$. 
From $W_{+}(\omega, \omega)=\frac{s^{*}}{2}-\frac{s}{6}$, we get $s^{*}=\frac{s}{3}$ for an anti-self-dual almost-K\"ahler metric. 
Moreover, there is a point where $s^{*}=s$ unless $5\chi+6\tau=0$ [1]. 
Suppose $5\chi+6\tau\neq 0$. Then, $s=0$ since $s$ is constant. 
Then by [14], $(M, g, \omega)$ is K\"ahler-Einstein. Suppose $5\chi+6\tau=0$. Since $2\chi+3\tau\geq 0$ for an Einstien manifolds, we get $\chi\leq 0$. 
Then, from the following formula, 
\[\chi=\frac{1}{8\pi^{2}}\int_{M}\left(\frac{s^{2}}{24}+|W_{+}|^{2}+|W_{-}|^{2}-\frac{|ric_{0}|^{2}}{2}\right)d\mu,\]
we get $s=0$. 

\end{proof}

Let $M$ be a smooth oriented four manifold which is diffeomorphic to a rational or ruled surface. 
Suppose $M$ admits an almost-K\"ahler structure $(g, \omega)$ and $2\chi+3\tau\geq0$. 
Then LeBrun showed the following inequality,
\[\int_{M}|W_{+}|^{2}d\mu\geq\frac{4\pi^{2}}{3}(2\chi+3\tau)(M).\]
For more general results, we refer to [9]. 
If $M$ admits an almost-K\"ahler Einstein metric, then from this inequality, we get 
 $\int_{M}\frac{s^{2}}{24}d\mu\leq \int_{M}|W_{+}|^{2}d\mu$. 
On the other hand, from Theorem 3, we have $\int_{M}\frac{s^{2}}{24}d\mu\geq \int_{M}|W_{+}|^{2}d\mu$.
Thus, we get the equality and therefore, $(M, g, \omega)$ is K\"ahler-Einstein positive scalar curvature. 
This gives another proof for Corollary 2.

\vspace{20pt}
$\mathbf{Acknowledgments}$: The author is thankful to Prof. Claude LeBrun for helpful comments.

\newpage

\end{document}